\documentclass[12pt]{article}
\usepackage[centertags]{amsmath}
\usepackage{amsfonts}
\usepackage{amssymb}
\usepackage{latexsym}
\usepackage{amsthm}
\usepackage{newlfont}
\usepackage{graphicx}
\usepackage{listings}
\usepackage{booktabs}
\usepackage{abstract}
\lstset{numbers=none,language=MATLAB}
\setcounter{page}{1}
\date{}

\bibliographystyle{amsplain}

\newlength{\defbaselineskip}
\setlength{\defbaselineskip}{\baselineskip}
\newcommand{\setlinespacing}[1]%
           {\setlength{\baselineskip}{#1 \defbaselineskip}}

\newcommand{\N}{{\mathbb{N}}}

\newcommand{\actaqed}{\hfill $\actabox$}
{\medskip\noindent \textit{Proof of #1. }}%
{\actaqed \medskip}

\def\R{{\mathbb R}}
\def\Z{\mathbb Z}

\def \<{\langle}
\def\>{\rangle}

\def \La{\Lambda}

\def \ff{\varphi}

\def\la{\lambda}

\def\bx{\mathbf x}
\def\by{\mathbf y}

\def\bk{\mathbf k}

\def\bW{\mathbf W}

\newtheorem{Theorem}{Theorem}[section]
\newtheorem{Lemma}{Lemma}[section]

\newtheorem{Corollary}{Corollary}[section]

\numberwithin{equation}{section}

\newcommand{\be}{\begin{equation}}
\newcommand{\ee}{\end{equation}}


\begin{document}

\title{ Numerical integration without smoothness assumption}
\author{V.N. Temlyakov\thanks{University of South Carolina, Lomonosov Moscow State University, and Steklov Institute of Mathematics.  }}
\maketitle
\begin{abstract}
{  We consider numerical integration in classes, for which we do not impose any smoothness assumptions. We illustrate how nonlinear approximation, in particular greedy approximation, allows us to guarantee some rate of decay of errors of numerical integration even in such a general setting with no smoothness assumptions. }
\end{abstract}

\section{Introduction} 
\label{I} 

The paper is devoted to numerical integration. The goal is to obtain rates of decay of errors 
of numerical integration for functions from a given function class. Theoretical aspects of the problem of numerical integration are intensely studied in approximation theory and in discrepancy theory. A typical problem in that regard is to study numerical integration in a given smoothness class (see, for instance, \cite{VTbookMA}, Ch. 6). It is a difficult area of research, related to discrepancy theory and other areas of research,  with a number of outstanding open problems (see, for instance, \cite{VT89}, \cite{VT164}, \cite{DTU}, and \cite{VT170}). 
In the case of classes of multivariate functions with mixed smoothness delicate number theoretical methods are used to build good cubature formulas. The main goal of this paper is to study numerical integration in much more general classes than smoothness classes. 
Clearly, we cannot expect that delicate methods developed for studying numerical integration in smoothness classes will apply to the case of general classes. It was observed in \cite{VT89} that very general method of nonlinear approximation, in particular greedy approximation, may be successfully used in numerical integration. However, it is known (see \cite{Tbook}, Section 2.7) that in general greedy approximation has a property of saturation. Usually, the saturation rate in $m^{-1/2}$, where $m$ is the number of iterations of a greedy algorithm. As a result in our applications of nonlinear approximation we cannot beat a barrier of $m^{-1/2}$. 

We now proceed to a detailed description of our results. Numerical integration seeks good ways of approximating an integral
$$
\int_\Omega f(\bx)d\mu
$$
by an expression of the form
\be\label{1.1}
\La_m(f,\xi) :=\sum_{j=1}^m\la_jf(\xi^j),\quad \xi=(\xi^1,\dots,\xi^m),\quad \xi^j \in \Omega,\quad j=1,\dots,m. 
\ee
It is clear that we must assume that $f$ is integrable and defined at the points
 $\xi^1,\dots,\xi^m$. Expression (\ref{1.1}) is called a {\it cubature formula} $(\xi,\La)$ (if $\Omega \subset \R^d$, $d\ge 2$) or a {\it quadrature formula} $(\xi,\La)$ (if $\Omega \subset \R$) with knots $\xi =(\xi^1,\dots,\xi^m)$ and weights $\La:=(\la_1,\dots,\la_m)$. 
 
 Some classes of cubature formulas are of special interest. For instance, the Quasi-Monte Carlo cubature formulas, which have equal weights $1/m$, are discussed in this paper. We use a special notation for these cubature formulas
 $$
 Q_m(f,\xi) :=\frac{1}{m}\sum_{j=1}^mf(\xi^j).
 $$
 
  For a function class $\bW$ we introduce a concept of error of the cubature formula $Q_m(\cdot,\xi)$ by
\be\label{1.3}
Q_m(\bW,\xi):= \sup_{f\in \bW} |\int_\Omega fd\mu -Q_m(f,\xi)|. 
\ee
The quantity $Q_m(\bW,\xi)$ is a classical characteristic of the quality of a given cubature formula $Q_m(\cdot,\xi)$.
 
 Let $1\le p\le\infty$ and let $F\in L_{p'}([0,1)^d)$ be a $1$-periodic function, where $p':= \frac{p}{p-1}$ is a dual to $p$ exponent. Consider the following class of functions
$$
\bW^F_p := \{f\, :\, f(\bx) = \int_{[0,1)^d} F(\bx-\by)\ff(\by)d\by,\quad \|\ff\|_p \le 1\}.
$$
Note that the case of classes $\bW^r_p$ of multivariate functions with bounded mixed derivative corresponds to the function 
$$
F(\bx):= F_{r}(\bx) := \prod_{j=1}^d F_{r}(x_j),\quad \bx=(x_1,\dots,x_d),
$$
where for a scalar $x$
$$
F_{r}(x):= 1+2\sum_{k=1}^\infty k^{-r}\cos (2\pi kx-r \pi/2).
$$

The following result (in a more general setting) is proved 
in \cite{VT149} (see also \cite{VT89} for previous results).  

\begin{Theorem}\label{IT1} Let $p\in (1,2]$ and let $\bW^F_p$ be a class of functions defined above. Assume that  $\|F\|_{p'}\le 1$. Then for any  $m$ there exists (provided by an appropriate greedy algorithm)  a cubature formula { $Q_m(\cdot,\xi)$} such that
$$
 Q_m(\bW^F_p,\xi)\le C(p-1)^{-1/2} m^{-1/2}  .
$$
\end{Theorem}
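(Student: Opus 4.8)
The plan is to reduce the problem to a nonlinear (greedy) approximation statement in a Hilbert space. First I would observe that the integral and the cubature formula both act linearly on $f \in \bW^F_p$, so for $f(\bx) = \int F(\bx-\by)\ff(\by)d\by$ with $\|\ff\|_p \le 1$ we have
\[
\int_\Omega f\, d\mu - Q_m(f,\xi) = \int_{[0,1)^d} g_\xi(\by)\ff(\by)\, d\by, \qquad g_\xi(\by) := \int_{[0,1)^d} F(\bx-\by)\,d\bx - \frac{1}{m}\sum_{j=1}^m F(\xi^j - \by).
\]
By duality, $\sup_{\|\ff\|_p \le 1} |\int g_\xi \ff| = \|g_\xi\|_{p'}$, so $Q_m(\bW^F_p,\xi) = \|g_\xi\|_{p'}$. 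Since $p \le 2$ we have $p' \ge 2$, and I would first handle the model case $p=2$ (hence $p'=2$), bounding $\|g_\xi\|_2$, and then recover the general $p\in(1,2]$ case. Writing $\tilde F(\by) := F(-\by)$ and noting $\int F(\bx-\by)d\bx$ is the constant $\hat F(\mathbf 0)$, the function $g_\xi$ is, up to that constant, the averaging error $\hat F(\mathbf 0) - \frac1m\sum_j \tilde F(\by - \xi^j)$; bounding its $L_2$ norm is exactly a problem of approximating the zero function (or the mean) by an $m$-term average of translates of $\tilde F$.

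**The greedy step.** The core is to run a greedy-type algorithm in $L_2([0,1)^d)$ on the dictionary $\mathcal D := \{\tilde F(\cdot - y) - \hat F(\mathbf 0) : y \in [0,1)^d\}$, whose elements have $L_2$-norm at most $\|F\|_{p'} \le \|F\|_2 \le 1$ (using $p'\ge 2$ and that we are on a probability space, or directly the hypothesis $\|F\|_{p'}\le 1$ together with an $L_2 \le L_{p'}$ embedding). The target element to approximate is $0$, but the relevant quantity is the norm of the $m$-term average, so I would instead invoke the standard estimate for greedy approximation of elements in the closure of the convex hull of a dictionary: since $0$ trivially lies in $\overline{\mathrm{conv}}(\mathcal D \cup (-\mathcal D))$ (it is the average of a function and its negative, or one uses that the mean of translates converges to $\hat F(\mathbf 0)$), the Relaxed Greedy Algorithm or the estimate of Maurey/Jones/Barron produces, after $m$ steps, an average $\frac1m\sum_{j=1}^m (\tilde F(\cdot - \xi^j) - \hat F(\mathbf 0))$ whose $L_2$-norm is $\le C m^{-1/2}$, with $C$ controlled by $\sup_{g\in\mathcal D}\|g\|_2 \le 1$. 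This $m^{-1/2}$ is precisely the saturation rate flagged in the introduction. This gives the theorem for $p=2$ with an absolute constant.

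**From $p=2$ to $p\in(1,2]$.** For general $p \in (1,2]$ I need $\|g_\xi\|_{p'}$ rather than $\|g_\xi\|_2$, with $p' \in [2,\infty)$. Here I would either (a) run the greedy algorithm directly in $L_{p'}$, using that $L_{p'}$ for $p'\ge 2$ is a smooth, uniformly convex Banach space with modulus of smoothness of power type $2$ and constant comparable to $p'-1 \sim (p-1)^{-1}$, so that the Banach-space greedy algorithm (as in \cite{Tbook}) yields a rate $C\,\gamma(p')^{1/2} m^{-1/2}$ where $\gamma(p') \sim p' \sim (p-1)^{-1}$; or (b) interpolate/bootstrap: bound $\|g_\xi\|_{p'}$ by combining the $L_2$ bound with the crude $L_\infty$ bound $\|g_\xi\|_\infty \le 2\|F\|_{p'}$ and using $\|g_\xi\|_{p'} \le \|g_\xi\|_2^{2/p'}\|g_\xi\|_\infty^{1-2/p'}$ — but this does not by itself produce the sharp $(p-1)^{-1/2}$ factor, so route (a) is the one I would actually pursue. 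The hard part, and the one where the $(p-1)^{-1/2}$ factor is won or lost, is exactly the quantitative control of the modulus of smoothness of $L_{p'}$ and the bookkeeping in the Banach-space greedy algorithm: one must track how the smoothness constant enters each iteration and verify it contributes only a square-root factor to the final bound. Everything else — the duality reduction, the identification of the dictionary, the elementary norm estimates on translates of $F$ — is routine.
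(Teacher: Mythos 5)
Your outline is correct and follows exactly the route the paper indicates for Theorem \ref{IT1} (whose proof is not reproduced in this paper but deferred to \cite{VT149}): duality reduces $Q_m(\bW^F_p,\xi)$ to $\|\hat F(\mathbf 0)-\frac{1}{m}\sum_{j}F(\xi^j-\cdot)\|_{p'}$, and a greedy algorithm run in the uniformly smooth space $L_{p'}$, $p'\ge 2$, whose modulus of smoothness is of power type $2$ with constant $\sim p'-1=(p-1)^{-1}$, produces the $(p-1)^{-1/2}m^{-1/2}$ bound, so your route (a) is the right one and your dismissal of route (b) is also correct. The one detail worth insisting on is that the algorithm must output an \emph{equal-weight} average $\frac{1}{m}\sum_{j}F(\xi^j-\cdot)$ of dictionary elements (since $Q_m$ is a Quasi--Monte Carlo formula with weights $1/m$), which is precisely what the Incremental Greedy Algorithm of \cite{VT149} is designed to guarantee; a generic weak or relaxed greedy algorithm with optimized coefficients, or the non-constructive Maurey argument, does not directly deliver this form, so that is the variant you should cite when completing the ``bookkeeping'' you flag as the hard part.
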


Proof of Theorem \ref{IT1} is based on the theory of greedy algorithms in Banach spaces. That theory is well developed under assumption that the Banach space is uniformly smooth, which means $\lim_{u\to 0} \rho(u)/u =0$, where $\rho(u)$ is a modulus of smoothness of the space. It is well known that the space $L_1$ is not uniformly smooth. This is why the case $p=1$ is excluded in Theorem \ref{IT1}. In this paper we analyze an algorithm -- the Averaging Search algorithm -- which allows us to prove an analog of Theorem \ref{IT1} in the case $p=1$ under additional assumptions on the kernel $F$.  We begin with the definition of the Averaging Search algorithm. This algorithm and its greedy version were analyzed in the recent paper \cite{BS}. 

{\bf Averaging Search algorithm.} Let $g\in L_1([0,1)^d)$ be a real $1$-periodic function satisfying  condition $\int_{[0,1)^d} g(\bx) d\bx =0$. We build a sequence $\xi^1$,..., $\xi^m$ of points from $[0,1)^d$ inductively. At the first step choose any $\xi^1\in [0,1)^d$. Suppose, $m\ge 2$ and after $m-1$ steps of the algorithm we have built points $\xi^1$,..., $\xi^{m-1}$. Then, at the $m$th step we choose $\xi^m\in [0,1)^d$ such that
\be\label{as1}
\sum_{j=1}^{m-1} g(\xi^m-\xi^j)  \le 0.
\ee

Note that such $\xi^m$ always exists. Indeed, by our assumption we have
$$
\int_{[0,1)^d} \sum_{j=1}^{m-1} g(\bx-\xi^j)d\bx  = 0
$$
and, therefore, there exists $\xi^m\in [0,1)^d$ satisfying (\ref{as1}). 

We now proceed to the main result of this paper. Denote $F^0(\bx):= F(\bx)-\hat F(\mathbf 0)$. 

\begin{Theorem}\label{asT1} Suppose that $\|F\|_\infty <\infty$ and function $F^0$ is a real 
even function, satisfying the condition $\hat F^0(\bk) \ge 0$, $\bk\in \Z^d$, $\bk\neq \mathbf 0$. Then for any $m\in\N$ there exists (provided by the Averaging Search algorithm applied to $g=F^0$) a set $\xi:=\{\xi^j\}_{j=1}^m$ of points in $[0,1)^d$ such that for the  cubature formula $Q_m(\cdot,\xi)$ we have
$$
Q_m(\bW^F_1,\xi) \le \|F^0\|_\infty m^{-1/2}.
$$

\end{Theorem}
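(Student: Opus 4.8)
The plan is to estimate the worst-case error of $Q_m(\cdot,\xi)$ directly in terms of the quantity $\sum_{i,j} F^0(\xi^i-\xi^j)$, and then to show that the Averaging Search algorithm applied to $g=F^0$ keeps this double sum controlled by $m\|F^0\|_\infty$. First I would write, for $f\in \bW^F_1$ with $f=F*\ff$ and $\|\ff\|_1\le 1$, the error
\be\label{planerr}
\int_{[0,1)^d} f\,d\mu - Q_m(f,\xi) = \int_{[0,1)^d}\Bigl(\widehat F(\mathbf 0) - \frac1m\sum_{j=1}^m F(\by-\xi^j)\Bigr)\ff(-\by)\,d\by,
\ee
since $\int f\,d\mu = \widehat F(\mathbf 0)\int \ff$ but more to the point $\int F(\bx-\by)d\bx=\widehat F(\mathbf 0)$ for every $\by$. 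Because $\|\ff\|_1\le 1$, taking the supremum over $f\in\bW^F_1$ gives
$$
Q_m(\bW^F_1,\xi) \le \Bigl\|\widehat F(\mathbf 0) - \frac1m\sum_{j=1}^m F(\cdot-\xi^j)\Bigr\|_\infty = \frac1m\Bigl\|\sum_{j=1}^m F^0(\cdot-\xi^j)\Bigr\|_\infty,
$$
using $F^0 = F - \widehat F(\mathbf 0)$.

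Next I would exploit the positive-definiteness hypothesis. Since $F^0$ is real, even, and $\widehat{F^0}(\bk)\ge 0$ for all $\bk\neq\mathbf 0$ (and $\widehat{F^0}(\mathbf 0)=0$), the function $F^0$ is (up to the zero mean) a positive-definite kernel, so for any points $\eta^1,\dots,\eta^m$ one has $\sum_{i,j=1}^m F^0(\eta^i-\eta^j)\ge 0$ — this follows by expanding in Fourier series, $\sum_{i,j}F^0(\eta^i-\eta^j) = \sum_{\bk\neq\mathbf 0}\widehat{F^0}(\bk)\bigl|\sum_j e^{2\pi i\bk\cdot\eta^j}\bigr|^2\ge 0$. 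The key quantitative point is to bound the $L_\infty$ norm above by the diagonal-free double sum: for the specific configuration produced by the algorithm I would instead estimate $\bigl\|\sum_j F^0(\cdot-\xi^j)\bigr\|_\infty$ by evaluating at a worst point and using positive-definiteness to show that $\bigl(\sum_j F^0(\bx-\xi^j)\bigr)$ cannot be too large; more directly, completing the configuration by adding $\bx$ itself as an $(m{+}1)$-st point and applying positive-definiteness of the $(m{+}1)$-point sum gives $F^0(\mathbf 0) + 2\sum_{j=1}^m F^0(\bx-\xi^j) + \sum_{i,j=1}^m F^0(\xi^i-\xi^j)\ge 0$, which is not the direction we want, so instead I would bound $\sum_j F^0(\bx-\xi^j)$ from above by $\sqrt{m}$-type quantities via Cauchy–Schwarz after the next step.

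The heart of the argument is the telescoping estimate for the algorithm. Set $S_n := \sum_{i,j=1}^n F^0(\xi^i-\xi^j)$. Then
$$
S_n = S_{n-1} + 2\sum_{j=1}^{n-1} F^0(\xi^n-\xi^j) + F^0(\mathbf 0) \le S_{n-1} + F^0(\mathbf 0),
$$
by the defining property \eqref{as1} of the Averaging Search algorithm (with $g=F^0$, using that $F^0$ is even so $F^0(\xi^n-\xi^j)=F^0(\xi^j-\xi^n)$). Since $S_1 = F^0(\mathbf 0)$, induction yields $S_m \le m\,F^0(\mathbf 0) \le m\|F^0\|_\infty$. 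Now, positive-definiteness gives, for any fixed $\bx$, that $\sum_{i,j=0}^m F^0(\eta^i-\eta^j)\ge 0$ where $\eta^0=\bx$, $\eta^j=\xi^j$, but to get an $L_\infty$ bound of the right order I would rather argue: $\bigl|\sum_{j=1}^m F^0(\bx-\xi^j)\bigr|$ — writing $a_\bk := \widehat{F^0}(\bk)^{1/2}$, $b_\bk(\bx) := \widehat{F^0}(\bk)^{1/2}\sum_j e^{2\pi i\bk\cdot(\bx-\xi^j)}$ is not obviously bounded, so the cleanest route is: by Cauchy–Schwarz in the sum over $j$ is unavailable; instead, since $\sum_{i,j}F^0(\xi^i-\xi^j)=\sum_{\bk\neq\mathbf 0}\widehat{F^0}(\bk)|\widehat{\mu_\xi}(\bk)|^2 \le m\|F^0\|_\infty$ where $\mu_\xi = \sum_j\delta_{\xi^j}$, and $\sum_j F^0(\bx-\xi^j) = \sum_{\bk\neq\mathbf 0}\widehat{F^0}(\bk)e^{2\pi i\bk\cdot\bx}\overline{\widehat{\mu_\xi}(\bk)}$, Cauchy–Schwarz over $\bk$ with weights $\widehat{F^0}(\bk)\ge 0$ gives
$$
\Bigl|\sum_{j=1}^m F^0(\bx-\xi^j)\Bigr| \le \Bigl(\sum_{\bk\neq\mathbf 0}\widehat{F^0}(\bk)\Bigr)^{1/2}\Bigl(\sum_{\bk\neq\mathbf 0}\widehat{F^0}(\bk)|\widehat{\mu_\xi}(\bk)|^2\Bigr)^{1/2} \le \bigl(F^0(\mathbf 0)\bigr)^{1/2}\bigl(m\|F^0\|_\infty\bigr)^{1/2} \le m^{1/2}\|F^0\|_\infty,
$$
using $\sum_{\bk\neq\mathbf 0}\widehat{F^0}(\bk) = F^0(\mathbf 0)\le\|F^0\|_\infty$. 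Dividing by $m$ gives $Q_m(\bW^F_1,\xi)\le \|F^0\|_\infty m^{-1/2}$, as claimed.

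The main obstacle I anticipate is the rigorous passage from the pointwise/Fourier identities to the $L_\infty$ bound while correctly handling convergence of the Fourier series of $F^0$ (justified by $\|F\|_\infty<\infty$ and nonnegativity of the coefficients, which makes $\sum_{\bk}\widehat{F^0}(\bk)$ absolutely convergent and equal to $F^0(\mathbf 0)$), together with making sure the reduction \eqref{planerr} is valid for $\ff\in L_1$ — this needs only Fubini plus the translation-invariance of the integral of $F$, so it should be routine. The genuinely essential ingredients are exactly the two hypotheses of the theorem: boundedness of $F$ (hence of $F^0$), which bounds both the diagonal term $F^0(\mathbf 0)$ and $\sum_{\bk}\widehat{F^0}(\bk)$; and nonnegativity of $\widehat{F^0}$, which is what makes the telescoping in the Averaging Search algorithm produce the bound $S_m\le m\|F^0\|_\infty$ and simultaneously legitimizes the weighted Cauchy–Schwarz.
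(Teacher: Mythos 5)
Your proof is correct and follows essentially the same route as the paper: the duality reduction to $\frac1m\bigl\|\sum_j F^0(\cdot-\xi^j)\bigr\|_\infty$, the weighted Cauchy--Schwarz over $\bk$ with weights $\hat F^0(\bk)\ge 0$, the identity $\sum_{i,j}F^0(\xi^i-\xi^j)=\sum_{\bk\neq\mathbf 0}\hat F^0(\bk)\bigl|\sum_j e^{2\pi i(\bk,\xi^j)}\bigr|^2$ (the paper's Lemma \ref{PL1}), and the telescoping bound $S_m\le mF^0(\mathbf 0)$ from the defining inequality of the Averaging Search algorithm. The false starts you record along the way (the $(m+1)$-point positive-definiteness attempt) are correctly abandoned and do not affect the final, valid argument.
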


Associate with a cubature formula $Q_m(\cdot,\xi)$ and the function $F$ the following function (see \cite{VT164})
\be\label{I1}
g_{\xi,Q,F}( \bx) := \sum_{ \bk\neq \mathbf 0}Q(\xi,\bk)\hat F
( \bk)e^{2\pi i( \bk, \bx)},
\ee
where
$$
Q(\xi,\bk):= Q_m(e^{2\pi i(\bk,\bx)},\xi).
$$
The following result is obtained in \cite{VT164}.

\begin{Theorem}\label{IT2} Let $1<p<\infty$ and $\|F\|_{p} \le 1$. Then there exists 
a set $\xi$ of $m$ points such that
$$
\|g_{\xi,Q,F}(\bx)\|_p \le Cp^{1/2} m^{-1/2}, \quad 2\le p<\infty, 
$$
$$
\|g_{\xi,Q,F}(\bx)\|_p \le C m^{\frac{1}{p}-1}, \quad 1< p <2.
$$
\end{Theorem}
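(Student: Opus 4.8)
The plan is to first put the function $g_{\xi,Q,F}$ into a form amenable to a probabilistic argument, then to choose the knots $\xi$ at random and estimate the expected $L_p$-norm by classical moment inequalities for sums of independent mean-zero random variables. Since $Q(\xi,\bk)=\frac1m\sum_{j=1}^m e^{2\pi i(\bk,\xi^j)}$ and $F^0(\bx)=\sum_{\bk\neq\mathbf 0}\hat F(\bk)e^{2\pi i(\bk,\bx)}$, the series (\ref{I1}) collapses to the closed form
$$
g_{\xi,Q,F}(\bx)=\frac1m\sum_{j=1}^m F^0(\bx+\xi^j).
$$
So the task becomes: choose $\xi^1,\dots,\xi^m$ so that the average of $m$ translates of the mean-zero kernel $F^0$ is small in $L_p$. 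I would take $\xi^1,\dots,\xi^m$ independent and uniform on $[0,1)^d$ and estimate $\bE\|g_{\xi,Q,F}\|_p^p$; a knot set attaining at most the expectation then exists. By Fubini the estimate reduces, for each fixed $\bx$, to a bound for $\bE|\frac1m\sum_j Y_j(\bx)|^p$, where $Y_j(\bx):=F^0(\bx+\xi^j)$ are, for fixed $\bx$, independent identically distributed mean-zero random variables whose common law is that of $F^0$ under Lebesgue measure; in particular $\bE|Y_j(\bx)|^2=\|F^0\|_2^2$ and $\bE|Y_j(\bx)|^p=\|F^0\|_p^p$, independently of $\bx$.

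For $2\le p<\infty$ I would apply the Marcinkiewicz--Zygmund (equivalently Burkholder/Khintchine) inequality in its square-function form, whose constant in the upper bound grows only like $p^{1/2}$: for independent mean-zero $Y_j$,
$$
\Bigl(\bE\bigl|\sum_j Y_j\bigr|^p\Bigr)^{1/p}\le C p^{1/2}\Bigl(\bE\bigl(\sum_j|Y_j|^2\bigr)^{p/2}\Bigr)^{1/p}.
$$
The square function is then handled by the triangle inequality in $L_{p/2}$, uniformly in $\bx$: $\|(\sum_j|Y_j(\bx)|^2)^{1/2}\|_p^2=\|\sum_j|Y_j(\bx)|^2\|_{p/2}\le \sum_j\||Y_j(\bx)|^2\|_{p/2}=m\|F^0\|_p^2$. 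Combining these, dividing by $m^p$, integrating in $\bx$, and using $\|F^0\|_p\le 2\|F\|_p\le 2$, I obtain $\bE\|g_{\xi,Q,F}\|_p^p\le (C p^{1/2}m^{-1/2})^p$, hence a knot set with $\|g_{\xi,Q,F}\|_p\le Cp^{1/2}m^{-1/2}$.

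For $1<p<2$ I would instead invoke the von Bahr--Esseen inequality $\bE|\sum_j Y_j|^p\le 2\sum_j\bE|Y_j|^p$, valid for independent mean-zero summands when $1\le p\le 2$. This gives $\bE|\frac1m\sum_j Y_j(\bx)|^p\le 2m^{1-p}\|F^0\|_p^p$ uniformly in $\bx$; integrating and taking $p$-th roots yields a $\xi$ with $\|g_{\xi,Q,F}\|_p\le Cm^{1/p-1}$. The closed form, the Fubini interchange, and the probabilistic selection are routine; the main obstacle, and the only step that fixes the exact shape of the bound, is obtaining the sharp dependence on $p$. The factor $p^{1/2}$ in the first regime is precisely the growth rate of the Marcinkiewicz--Zygmund constant, and it is essential to use the square-function form: a direct Rosenthal-type ``$\max$'' estimate would produce a constant of order strictly larger than $p^{1/2}$. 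The transition $m^{-1/2}\rightsquigarrow m^{1/p-1}$ at $p=2$ reflects the fact that for $p<2$ the heavy-tail term $m^{1-p}\|F^0\|_p^p$, rather than the $L_2$ term $m^{-p/2}\|F^0\|_2^p$, dominates, so I must make sure each moment inequality is applied with a constant of the stated order rather than the cruder $p$-dependent constants a naive argument would give.
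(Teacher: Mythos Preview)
The paper does not contain a proof of Theorem~\ref{IT2}; it is quoted verbatim from the reference \cite{VT164} (``The following result is obtained in \cite{VT164}''), so there is no in-paper argument to compare against.

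That said, your proposal is sound. The closed-form identity $g_{\xi,Q,F}(\bx)=\tfrac1m\sum_{j=1}^m F^0(\bx+\xi^j)$ is correct, and once the knots are drawn independently and uniformly, the Fubini step reduces the problem to pointwise moment bounds for sums of i.i.d.\ mean-zero variables with the law of $F^0$. For $p\ge 2$, the Marcinkiewicz--Zygmund inequality with constant of order $p^{1/2}$ together with the triangle inequality in $L_{p/2}$ gives exactly $Cp^{1/2}m^{-1/2}$; for $1<p<2$, von Bahr--Esseen yields $Cm^{1/p-1}$. Existence of a good deterministic $\xi$ then follows from the expectation bound. This probabilistic (Monte Carlo) route is in fact the standard way such statements are obtained, and it is consistent with the spirit of \cite{VT164}, though I cannot confirm from the present paper whether the argument there is literally the same.
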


We now formulate a corollary of Theorem \ref{IT1}, which complements Theorem \ref{IT2}. 
Let function $g_{\xi,Q,F}(\bx)$, associated with a function $F$ and a cubature formula $Q_m(\cdot,\xi)$, be defined by (\ref{I1}). 
\begin{Corollary}\label{IC1} Suppose that $\|F\|_\infty <\infty$ and function $F^0$ is a real 
even function, satisfying the condition $\hat F^0(\bk) \ge 0$, $\bk\in \Z^d$, $\bk\neq \mathbf 0$. Then for any $m\in\N$ there exists (provided by the Averaging Search algorithm applied to $g=F^0$) a set $\xi:=\{\xi^j\}_{j=1}^m$ of points in $[0,1)^d$ such that  
$$
\|g_{\xi,Q,F}(\bx)\|_\infty \le C\|F\|_\infty m^{-1/2}.
$$

\end{Corollary}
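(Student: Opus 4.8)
The plan is to reduce the corollary to the very quadratic‑form estimate that drives Theorem \ref{asT1}. The first step is an explicit formula for the companion function. Since $\hat F(\bk)=\hat F^0(\bk)$ for $\bk\neq\mathbf 0$ and $Q(\xi,\bk)=\frac1m\sum_{j=1}^m e^{2\pi i(\bk,\xi^j)}$, interchanging the (absolutely convergent, because $|Q(\xi,\bk)|\le 1$) sum in \eqref{I1} with the finite sum over $j$ gives
\[
g_{\xi,Q,F}(\bx)=\frac1m\sum_{j=1}^m F^0(\bx+\xi^j).
\]
Before exploiting this I would record two elementary facts about $F^0$. First, from $\hat F^0(\mathbf 0)=\int F^0=0$ and $\hat F^0(\bk)\ge 0$ for $\bk\neq\mathbf 0$, convolving $F^0$ with the Fejér kernel (nonnegative, $L_1$-normalized) and applying Fatou's lemma gives $\sum_{\bk}\hat F^0(\bk)<\infty$; hence $F^0$ agrees a.e.\ with a continuous even function and $0\le F^0(\mathbf 0)=\sum_{\bk\neq\mathbf 0}\hat F^0(\bk)\le\|F^0\|_\infty$. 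Second, $|\hat F(\mathbf 0)|=\bigl|\int_{[0,1)^d}F\bigr|\le\|F\|_1\le\|F\|_\infty$, so $\|F^0\|_\infty\le 2\|F\|_\infty$.

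Next I would estimate the quadratic form $A_m:=\sum_{i,j=1}^m F^0(\xi^i-\xi^j)$. Writing $a_\ell:=\sum_{i,j=1}^\ell F^0(\xi^i-\xi^j)$ and splitting off the $\ell$-th row and column, evenness of $F^0$ gives $a_\ell=a_{\ell-1}+2\sum_{j=1}^{\ell-1}F^0(\xi^\ell-\xi^j)+F^0(\mathbf 0)$; the defining inequality \eqref{as1} of the Averaging Search algorithm, applied with $g=F^0$, makes the middle term nonpositive, so $a_\ell\le a_{\ell-1}+F^0(\mathbf 0)$, and since $a_1=F^0(\mathbf 0)$ induction yields $A_m\le m\,F^0(\mathbf 0)$. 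This is exactly the estimate used to prove Theorem \ref{asT1}.

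Finally I would combine the two ingredients by Cauchy--Schwarz on the Fourier side. Put $c_\bk:=\sum_{j=1}^m e^{2\pi i(\bk,\xi^j)}$, so that $m\,g_{\xi,Q,F}(\bx)=\sum_{\bk\neq\mathbf 0}\hat F^0(\bk)\,c_\bk\,e^{2\pi i(\bk,\bx)}$; writing $\hat F^0(\bk)=\hat F^0(\bk)^{1/2}\hat F^0(\bk)^{1/2}$ (legitimate as $\hat F^0(\bk)\ge 0$) and using $\sum_{\bk\neq\mathbf 0}\hat F^0(\bk)|c_\bk|^2=\sum_{i,j}F^0(\xi^i-\xi^j)=A_m$ gives
\[
m^2\,|g_{\xi,Q,F}(\bx)|^2\le\Big(\sum_{\bk\neq\mathbf 0}\hat F^0(\bk)\Big)\Big(\sum_{\bk\neq\mathbf 0}\hat F^0(\bk)|c_\bk|^2\Big)=F^0(\mathbf 0)\,A_m\le m\,F^0(\mathbf 0)^2 .
\]
Hence $\|g_{\xi,Q,F}\|_\infty\le F^0(\mathbf 0)\,m^{-1/2}\le\|F^0\|_\infty m^{-1/2}\le 2\|F\|_\infty m^{-1/2}$, which is the assertion with $C=2$.

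The computation is routine once the formula for $g_{\xi,Q,F}$ is available; the only point that needs care is the passage from $F^0\in L_\infty$ to the absolute convergence of its Fourier series and the identity $F^0(\mathbf 0)=\sum_{\bk\neq\mathbf 0}\hat F^0(\bk)$, which is what makes all the interchanges of summation above legitimate — this is precisely where the hypotheses "$\|F\|_\infty<\infty$", "$F^0$ even", and "$\hat F^0(\bk)\ge 0$" are used. An alternative, shorter route would be to invoke the duality $\|g_{\xi,Q,F}\|_\infty=Q_m(\bW^F_1,\xi)$ from \cite{VT164} and then apply Theorem \ref{asT1} together with $\|F^0\|_\infty\le 2\|F\|_\infty$; I prefer the self‑contained computation above since it displays the same mechanism as the proof of Theorem \ref{asT1}.
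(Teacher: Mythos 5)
Your proof is correct and follows essentially the same route as the paper: the identity $g_{\xi,Q,F}(\bx)=\frac1m\sum_j F^0(\bx+\xi^j)$ together with evenness of $F^0$ reduces the corollary to the chain of estimates in the proof of Theorem \ref{asT1} (Lemma \ref{PL1}, Cauchy--Schwarz with $\hat F^0(\bk)\ge 0$, and the Averaging Search inequality giving $\sum_{j,n}F^0(\xi^n-\xi^j)\le mF^0(\mathbf 0)$), plus $\|F^0\|_\infty\le 2\|F\|_\infty$. Your Fej\'er-kernel justification of $\sum_{\bk}\hat F^0(\bk)=F^0(\mathbf 0)\le\|F^0\|_\infty$ is a welcome extra degree of care that the paper leaves implicit.
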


We note that the Averaging Search algorithm is not a greedy type algorithm. The following greedy version of this algorithm has been studied in a very recent paper \cite{BS}.

{\bf Greedy Averaging Search algorithm.} Let  $g$ be a real continuous $1$-periodic function satisfying  condition $\int_{[0,1)^d} g(\bx) d\bx =0$. We build a sequence $\xi^1$,..., $\xi^m$ of points from $[0,1)^d$ inductively. At the first step choose any $\xi^1\in [0,1)^d$. Suppose, $m\ge 2$ and after $m-1$ steps of the algorithm we have built points $\xi^1$,..., $\xi^{m-1}$. Then, at the $m$th step we choose $\xi^m\in [0,1)^d$ such that
\be\label{G1}
\sum_{j=1}^{m-1} g(\xi^m-\xi^j)  = \min_{\bx\in [0,1)^d} \sum_{j=1}^{m-1} g(\bx-\xi^j).
\ee

Clearly, the Greedy Averaging Search algorithm is a realization of the Averaging Search algorithm and, therefore, Theorem \ref{IT1} and Corollary \ref{IC1} hold for points obtained 
by the Greedy Averaging Search algorithm. It is an interesting open problem, which is discussed in detail in \cite{BS}, to understand if the Greedy Averaging Search algorithm can give better error bounds than $m^{-1/2}$. 

\section{Proof of Theorem \ref{asT1}} 
\label{P}

We begin with a simple identity, which was used in \cite{VT50} in a context of numerical integration (see also \cite{BS}). 

\begin{Lemma}\label{PL1} Let $g$ be a $1$-periodic function with absolutely convergent Fourier series satisfying  condition $\int_{[0,1)^d} g(\bx) d\bx =0$. For a given set of points $X_m:=\{\bx^j\}_{j=1}^m$,
denote
$$
  Q(X_m,\bk) := Q_m(e^{2\pi i(\bk,\bx)}, X_m)=\frac{1}{m} \sum_{j=1}^m e^{2\pi i (\bk,\bx^j)}.
$$
Then
$$
\sum_{j,n=1}^m g(\bx^n-\bx^j) = m^2\sum_{\bk\neq \mathbf 0} \hat g(\bk) |Q(X_m,\bk)|^2.
$$
\end{Lemma}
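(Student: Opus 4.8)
The plan is to substitute the Fourier expansion of $g$ into the double sum and interchange the order of summation. Since $g$ has an absolutely convergent Fourier series and $\int_{[0,1)^d} g\, d\bx = \hat g(\mathbf 0) = 0$, one may write
$$
g(\bx) = \sum_{\bk \neq \mathbf 0} \hat g(\bk)\, e^{2\pi i (\bk, \bx)},
$$
with the series converging absolutely and uniformly on $[0,1)^d$. Applying this with $\bx = \bx^n - \bx^j$ and splitting the exponential as $e^{2\pi i(\bk, \bx^n - \bx^j)} = e^{2\pi i(\bk,\bx^n)}\, \overline{e^{2\pi i (\bk, \bx^j)}}$ gives, for each pair $(j,n)$,
$$
g(\bx^n - \bx^j) = \sum_{\bk\neq\mathbf 0} \hat g(\bk)\, e^{2\pi i(\bk,\bx^n)}\, \overline{e^{2\pi i(\bk,\bx^j)}}.
$$

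Next I would sum over $j,n = 1,\dots,m$. Because the outer index set is finite and the series in $\bk$ converges absolutely, uniformly in $j,n$ (indeed $|\hat g(\bk)\, e^{2\pi i(\bk,\bx^n)}\, \overline{e^{2\pi i(\bk,\bx^j)}}| = |\hat g(\bk)|$), one may swap the two summations to obtain
$$
\sum_{j,n=1}^m g(\bx^n - \bx^j) = \sum_{\bk\neq\mathbf 0} \hat g(\bk) \Bigl( \sum_{n=1}^m e^{2\pi i(\bk,\bx^n)} \Bigr)\Bigl( \sum_{j=1}^m \overline{e^{2\pi i(\bk,\bx^j)}} \Bigr).
$$
By the definition of $Q(X_m,\bk)$ one has $\sum_{n=1}^m e^{2\pi i(\bk,\bx^n)} = m\, Q(X_m,\bk)$ and $\sum_{j=1}^m \overline{e^{2\pi i(\bk,\bx^j)}} = m\, \overline{Q(X_m,\bk)}$, so the bracketed product equals $m^2 |Q(X_m,\bk)|^2$, which yields the asserted identity. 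Note that nothing here requires $g$ to be real.

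There is no serious obstacle: the only step needing a word of justification is the interchange of the finite sum over $(j,n)$ with the infinite sum over $\bk$, and this is immediate from the absolute convergence of the Fourier series of $g$. One could equivalently phrase the computation through Parseval: with $\mu_{X}:=\frac1m\sum_{j=1}^m\delta_{\bx^j}$ the left side is $m^2\langle g*\mu_X,\mu_X\rangle$ and expanding in the Fourier basis gives the same formula, but the direct computation above is the cleanest self-contained route.
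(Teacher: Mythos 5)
Your proof is correct and follows the same route as the paper's: expand $g(\bx^n-\bx^j)$ via its Fourier series with $\hat g(\mathbf 0)=0$, sum over $n$ and $j$, and recognize the resulting double sum of exponentials as $m^2|Q(X_m,\bk)|^2$. You simply spell out the (harmless) interchange of the finite sum with the absolutely convergent series, which the paper leaves implicit.
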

\begin{proof} For the reader's convenience we present this simple proof here. We have
$$
g(\bx^n-\bx^j) = \sum_{\bk\neq \mathbf 0} \hat g(\bk) e^{2\pi i (\bk,\bx^n)} e^{-2\pi i (\bk,\bx^j)}.
$$
Performing summation with respect to $n$ and $j$ we obtain the required identity. 
\end{proof}

We continue the proof of Theorem \ref{IT1}. By duality relation (see \cite{VT89} and \cite{VTbookMA}, p.254, (6.3.2)) we obtain
\be\label{P2}
Q_m(\bW^F_1,X_m) = \|\hat F(\mathbf 0) - \frac{1}{m} \sum_{\mu=1}^m F(\bx^\mu-\by)\|_\infty
= \|\frac{1}{m} \sum_{\mu=1}^m F^0(\bx^\mu-\by)\|_\infty.
\ee
We have
$$
\|\frac{1}{m} \sum_{\mu=1}^m F^0(\bx^\mu-\by)\|_\infty \le \sum_{\bk} |\hat F^0(\bk)||Q(X_m,\bk)|
$$
\be\label{P3}
  \le \left(\sum_{\bk} |\hat F^0(\bk)|\right)^{1/2} \left(\sum_{\bk} |\hat F^0(\bk)||Q(X_m,\bk)|^2\right)^{1/2}.
\ee
By our assumption on the $F^0$ we obtain
$$
\|F^0\|_\infty = F^0(\mathbf 0) = \sum_{\bk} |\hat F^0(\bk)|.
$$
Using Lemma \ref{PL1} we obtain from here and (\ref{P3})
\be\label{P4}
\|\frac{1}{m} \sum_{\mu=1}^m F^0(\bx^\mu-\by)\|_\infty \le \|F^0\|_\infty^{1/2} m^{-1} \left(\sum_{j,n=1}^m F^0(\bx^n-\bx^j)\right)^{1/2}.
\ee
We now set $\bx^j =\xi^j$ with $\xi^j$ obtained from the Averaging Search algorithm applied to $g=F^0$. Then, we have 
$$
\sum_{j,n=1}^m F^0(\bx^n-\bx^j) = mF^0(\mathbf 0) + 2\sum_{1\le j<n\le m}F^0(\xi^n-\xi^j) 
$$
$$
=
mF^0(\mathbf 0) + 2\sum_{n=2}^m\sum_{ j=1} ^{n-1}F^0(\xi^n-\xi^j).
$$
It remains to note that by the choice of $\xi^n$ we have 
$$
\sum_{ j=1} ^{n-1}F^0(\xi^n-\xi^j) \le 0.
$$

{\bf Acknowledgment.}   The work was supported by the Russian Federation Government Grant No. 14.W03.31.0031.

\end{document}